\newcommand{\ra}{\rangle}
\newcommand{\la}{\langle}
\numberwithin{equation}{section}
\theoremstyle{plain}
\newtheorem{lem}{Lemma}[section]
\newtheorem{thm}[lem]{Theorem}
\newtheorem{cor}[lem]{Corollary}
\newtheorem{pro}[lem]{Proposition}
\newtheorem{exa}[lem]{Example}
\newtheorem{defn}[lem]{Definition}
\theoremstyle{definition}
\theoremstyle{remark}
\def\ca{{\mathcal A}}
\def\cd{{\mathcal D}}
\def\ce{{\mathcal E}}
\def\cf{{\mathcal F}}
\def\cam{{\mathcal M}}
\def\bn{{\mathbb N}}
\def\br{{\mathbb R}}
\def\bt{{\mathbb T}}
\def\bz{{\mathbb Z}}
\def\a{\alpha}
\def\eps{\varepsilon}
\def\d{\delta}
\numberwithin{equation}{section}
\def\ca{{\mathcal A}}
\def\cd{{\mathcal D}}
\def\ce{{\mathcal E}}
\def\cf{{\mathcal F}}
\def\cam{{\mathcal M}}
\def\bn{{\mathbb N}}
\def\br{{\mathbb R}}
\def\bt{{\mathbb T}}
\def\bz{{\mathbb Z}}
\def\a{\alpha}
\def\eps{\varepsilon}
\def\d{\delta}
\def\f{\varphi}
\begin{document} 

\title{ON WEAKLY $D-$DIFFERENTIABLE OPERATORS}
\author[E.~Christensen]{Erik Christensen}
\address{\hskip-\parindent
Erik Christensen, Institut for Matematiske Fag, University of Co-penhagen, Demark.}
\email{echris@math.ku.dk}

\date{\today}

\begin{abstract}
 Let $D$ be a self-adjoint operator on a Hilbert space $H$ and $a$ a bounded operator on $H.$ We say that $a$ is weakly $D-$differentiable, if for any pair of vectors $\xi, \eta$ from  $H$ the function  $ \la e^{itD}ae^{-itD}\xi, \eta\ra$ is differentiable.  We give an elementary example of a bounded  operator $a,$ such that $a$ is weakly $D-$differentiable, but the function  $  e^{itD}ae^{-itD}$ is not uniformly differentiable.
 We show that  {\em weak $D-$differentiability } may be characterized by several other properties, some of which are related to the commutator $(Da - aD).$
\end{abstract}

\maketitle

\section{Introduction}

In mathematical physics and especially in quantum mechanics
commutators between operators on Hilbert space have played a
fundamental role for about 100 years.  People are mostly concerned with an expression like $i[D,a],$
where $D$ is an unbounded self-adjoint operator,  and $a$ is an operator
representing an observable.

Since the appearance of the papers \cite{K} and \cite{S}
by Kadison and Sakai respectively, the research on bounded and
unbounded derivations on C*-algebras, \cite{KR},  took off, and an impressive amount
of research has been published. As far as we know, the article
\cite{PS}, was the first which dealt with unbounded derivations on
C*-algebras.

  It is desirable to be able to perform commutators  like $[D,a]$
 inside the mathematical discipline { \em
    noncommutative geometry} \cite{AC}. The reason is, that in Connes'
  set up of noncommutative geometry the space is replaced by an
  algebra of operators on a Hilbert space, and derivatives of functions are replaced by commutators of the type $[D,a].$

Usually the commutator $i[D,a]$ appears as the derivative with respect to the norm topology at $t=0$ of the function $  e^{itD}ae^{-itD},$ and here  we meet a part of the problem which inspired this investigation. Actually the norm  derivative, or uniform derivative as we prefer to say, is a bounded and everywhere defined operator, whereas the commutator $i(Da-aD)$ is only defined on dom$(D)\cap \{\xi \in H\, \big| \, a\xi \in \mathrm{dom}(D) \, \},$ which may not even be a dense subset of $H.$ 

 In order to overcome some of the problems with respect to the domain of definition for sums of unbounded operators, Barry Simon suggested in \cite{BS} that one should look at forms rather than at operators. For any bounded operator $a$ on $H$ it is possible to define a sesquilinear form on dom$(D)$ by 
$$\forall \xi, \eta \in \mathrm{dom}(D):\quad S(i[D,a])(\xi, \eta) := i(\la a\xi, D\eta\ra - \la aD\xi, \eta\ra).$$
Sometimes this form is bounded and is implemented by a bounded operator $b$ such that
$$\forall \xi, \eta \in \mathrm{dom}(D):\quad S(i[D,a])(\xi, \eta) = \la b\xi, \eta\ra.$$ We have not been able to find a systematic presentation of the results which relate to this phenomenon. We know that many people know many details and aspects of this set up, but we have given up on the project to find out who presented this or that property first.
It is our hope that this article may help to clarify and distinguish between some properties, which are related to the function $e^{itD}ae^{-itD}.$ We concentrate on various kinds of differentiability at the point $t = 0,$ which we list below.\begin{defn} \label{DifProp}
\begin{itemize}
\item []
\item [] We say that a bounded operator $a$ is uniformly  $D-$differentiable if  there exists a bounded operator $b$ on $H$ such that $$ \qquad \qquad \underset{t \to 0}{\lim} \|\frac{e^{itD}ae^{-itD} -a}{t} - b\| = 0. $$ The operator $b$ is called the uniform  $D-$derivative of $a$ and denoted $\d_u^D(a).$  
\item []  We say that a bounded operator $a$ is strongly  $D-$differentiable if there exists a bounded operator $b$ on $H$ such that $$\forall \xi \in H:\quad \underset{t \to 0}{\lim} \|\big(\frac{e^{itD}ae^{-itD} -a}{t} - b\big)\xi\| = 0. $$
\item []  We say that a bounded operator $a$ is weakly  $D-$differentiable if there exists a bounded operator $b$ on $H$ such that $$\forall \xi, \eta  \in H:\quad \underset{t \to 0}{\lim} |\langle \big(\frac{e^{itD}ae^{-itD} -a}{t} - b\big)\xi, \eta \rangle| = 0. $$
The operator $b$ is called the weak $D-$derivative of $a$ and denoted $\d_w^D(a).$   
\end{itemize}
\end{defn}

The properties above are listed according to strength, with the stron-gest at the top. 
The present article is based on the observation that there exists an elementary example, which shows that a weakly $D-$dif-ferentiable operator may not be uniformly $D-$differentiable.  One of the referees then asked about the concept {\em strong $D-$differentiability}, and it turns out, as we shall see, that this property is equivalent to {\em weak $D-$differentiability.}  

We have found several other properties which are equivalent to weak $D-$ differentiability and we list them here without all the exact definitions.
\begin{itemize}
\item[(i)] The function $ e^{itD}ae^{-itD}$ is Lipschitz continuous with respect to the operator norm.
\item[(ii)]The sesquilinear form $S(i[D,a])(\xi, \eta)$ on dom$(D)$ is bounded.
\item[(iii)] The infinite matrix commutator $m(i[D,a])$ is the matrix of a bounded operator. 
\item[(iv)] The commutator $i[D,a] $ is defined on dom$(D)$ and bounded.
\end{itemize}

The domain of definition  for $\d_w^D $ is a subalgebra of $B(H),$ which we denote by  $\cd_w^D,$ and $\d_w^D $ becomes a derivation of this algebra into $B(H).$  The domain of definition for $\d_u^D $ is denoted $\cd_u^D. $ It is a subalgebra of 
$\cd_w^D $ and $\d_u^D $  is a derivation of this algebra into $B(H).$ 

This article was preceded by a preprint which contains most of the results presented here, but the emphasis there  was laid on the space of infinite matrices which are naturally associated with $D.$ We still think that this point of view is fruitful since it provides a setting inside which higher commutators like $[D, [D, \dots ,[D, a]\dots ]]$ make sense as matrices even if none of the matrices represent bounded operators. Later we realized that the concept named weak $D-$differentiability probably is the one among all the equivalent ones, which is the most natural to focus on.  

We end the article by a characterization of those weakly $D-$differen-tiable operators, that are uniformly $D-$differentiable and it turns out that for a natural number $k$ a bounded operator $a$ is $k$ times uniformly $D-$differentiable if it is $k+1$ times weakly $D-$differentiable.

We are happy to be able to thank Alain Connes, Marc A. Rieffel, Barry Simon and the referees for valuable comments and most relevant suggestions for improvements.

\section{Uniform $D-$differentiability}
The theory of one parameter groups of isometries on Banach spaces is very important and applied in many different settings. Here we will only rely on the most elementary parts of this theory. 

Given a self-adjoint operator $D$ on a Hilbert space $H$ we can define a strongly continuous one parameter   unitary group $u_t$ on $H$ by $u_t := e^{itD} $ and a one parameter group $\a_t$  of *-automorphisms on $B(H)$ by $\a_t(a) := u_tau_{-t}.$ The group $\a_t$ is not in general strongly continuous on $B(H), $ and that aspect is  actually the basic reason, why this article may have a life of its own, as the result in Theorem \ref{CSD} shows.  

We start by extending the differentiability from one point to all of $\br.$

\begin{lem} \label{trans}
Let $a$ be a bounded uniformly $D-$differentiable  operator then for any real $s$ the operator $\a_s(a)$ is uniformly $D-$differentiable and the function $s \to \a_s(a)$ is uniformly differentiable everywhere such that 
$$ \d_u^D (a_s(a)) \, = \,\frac{d}{ds}\a_s(a)\, = \,  \a_s(\d_u^D(a)).$$
 
\end{lem}
 \begin{proof}
For any real $s$ the automorphism $\a_s$ is an isometry on $B(H)$, and for any pair of reals $s, t$ we have $\a_t \circ \a_s = \a_{t+s},$  hence for any reals $s$ and $t $ we have 
\begin{align*} \underset{t \to 0}{\lim} &\| \frac{\a_t(\a_s(a))- \a_s(a)}{t} - \a_s(\d_u^D(a)\| \\= \underset{t \to 0}{\lim}&\|\frac{\a_{t+s}(a)- \a_s(a)}{t} - \a_s(\d_u^D(a)\| \\=  \underset{t \to 0}{\lim} &\|\a_s\big(\frac{(\a_t(a))- a)}{t} - \d_u^D(a)\big)\|  \, = \, 0.\end{align*}
 \end{proof} 

We collect some of the well known properties for $\d_u^D.$ 
\begin{thm} \label{du}
\begin{itemize}
\item[]
\item[(i)] For a bounded uniformly $D-$differentiable operator $a$ and a vector $\xi$ in $\mathrm{dom}(D)$
$$ a\xi \in \mathrm{dom}(D) \text{ and } i(Da-aD)\xi = \d_u^D(a)\xi.$$
\item[(ii)] The operator $\d_u^D$ is closed and its domain of definition is dense in $B(H)$ in the strong operator topology. 
The domain $\cd_u^D $ is in general not norm dense in $B(H).$ 
\item[(iii)] The linear space $\cd_u^D := \mathrm{dom}(\d_u^D)$ is a Banach *-algebra under the norm $\|a\|_{D}:= \|a\| + \|\d_u^D(a)\|,$ and  $\d_u^D$ is a derivation into $B(H).$ 
\end{itemize} \end{thm}
\begin{proof}

Item (i)  follows from the fact that the operator $iD$ is the generator of the one parameter unitary group $u_t ,$ but more details will follow in the next section, when we discuss the similar  result for a weakly $D-$differentiable operator. 

Item (ii) has a standard  proof based on the fundamental theorem of calculus. The arguments we will use will be reused in the proofs of the Theorems \ref{WD} and \ref{CSD}, so we present the details here.  Suppose $(a_n)$ is a  convergent sequence of uniformly $D-$differentiable operators such that $$ a_n \to a \text{ and } b_n :=  \d_u^D(a_n ) \to b \text{ for } n \to \infty,$$
Then for every $n $ there exists a function $t \to \eps_n(t) \in B(H), $ which is continuous at $0$ and satisfies $\eps_n(0) = 0,$ such that 
$$ \forall t  \in \br : \quad \a_t(a_n) = a_n + tb_n + t\eps_n(t).$$ By replacing $t$ by $-t$ and  applying $\a_t$ we get 
$$ \forall t  \in \br : \quad a_n = \a_t(a_n)  - t\a_t(b_n) - t\a_t(\eps_n(-t)). $$ By adding the 2 equations, rearrangements and division by $t$ we get
\begin{equation} \label{bCont} \forall t \in \br : \quad \a_t(b_n)- b_n  = \eps_n(t) - \a_t(\eps_n(-t)).
\end{equation} 

This shows that the function $\a_t(b_n)$ is continuous with respect to the norm topology on $B(H)$ at $t = 0. $ Let $s, t$ be reals then $\a_{s+t}(b_n) = \a_t(\a_s(b_n))$ and since $\a_s$ is an isometry we get 
$$ \|\a_{s+t}(b_n) - \a_s(b_n) \| = \|\a_s\big(\a_{t}(b_n) - b_n\big) \| = \|\a_{t}(b_n) - b_n \|,$$ and we see that the function $\a_s(b_n) $ is uniformly continuous on $\br.$ 
Since $\|\a_t(b) - \a_t(b_n)\| = \|b - b_n\|$ the function $t \to \a_t(b)$ is the uniform limit of a sequence of continuous functions and hence  also a  continuous function. Then the Riemann integral $\int_0^t \a_s(b)ds$ exists and  we may define an operator valued  norm differentiable function  $c(t)$ by $$c(t) : = a + \int_0^t\a_s(b)ds.$$
On the other hand we know that the function $\a_t(a_n) $ is norm differentiable with derivative $\a_t(b_n),$ so for each $n$  
$$ \a_t(a_n) = a_n + \int_0^t\a_s(b_n)ds.$$
Hence  $c(t) = \a_t(a)$ and $a$ is uniformly $D-$differentiable with $\d_u^D(a) = b.$

To show that the domain of definition for $\d_u^D$ is strongly dense in $B(H)$ we let $E_n$ denote the spectral projection for $D$ corresponding to the interval $[-n,n], $ then $DE_n$ is a bounded operator defined on $H$ and  for any bounded operator $c$ on $H$ we have
$$e^{itD}E_ncE_ne^{-itD} = e^{it(DE_n)}E_ncE_ne^{-it(DE_n)}$$ is uniformly differentiable at $t=0$ with derivative $i[DE_n, E_ncE_n].$  Hence $E_ncE_n$ is in the domain of $\d_u^D$ with $\d_u^D(E_ncE_n) = i[DE_n, E_ncE_n] . $  Since the sequence $(E_ncE_n)$ converges to $c$ in the strong operator topo-logy, it follows that the domain of $\d_u^D$ is strongly dense in $B(H).$  In the Example \ref{No} we show that the domain of  $\d_u^D$ may not be norm dense in $B(H).$

Item (iii): Let $a, b $ be uniformly $D-$differentiable operators, then the mapping $t \to \a_t(b)$ is continuous at $t=0$ by assumption, so the standard  identity below,  valid for $t \neq 0$  
$$\frac{\a_t(ab) - ab}{t}  = \frac{\a_t(a) - a}{t}\a_t(b) + a\frac{\a_t(b) - b}{t }$$ shows that $$ ab \in \mathrm{dom}(\d_u^D) \text{ and } \d_u^D(ab) = \d_u^D(a)b + a \d_u^D(b).$$
Then the space of uniformly $D-$differentiable operators is an algebra, and $\d_u^D$ is a derivation on this algebra. The Banach space property follows from the fact that $\d_u^D$ is a closed derivation, and the submultiplicativity of the norm $\|a\|_D$ from the derivation property. The *-operation is just inherited from $B(H).$  
\end{proof}

\section{Weak and other notions of $D-$differentiability}

We show by an example that weak and uniform  $D-$differentiability are different properties. Then we define some other types of $D-$differen-tiability which we have met in the literature and one we think is original. Finally we show that they are all equivalent to weak $D-$differentia-bility and that strong and weak $D-$differentiability are equivalent too.
We also show that the operator $\d_w^D$ is closed, its domain of definition is a Banach *-algebra under a suitable norm, and it is a derivation on this algebra. Most of this may be known to many people, but we have not been able to find an explicit presentation of the equivalences of all the properties. 

\subsection{ Weak but not uniform  $D-$differentiability.} \label{WneqU}
We show by an example based on classical Fourier analysis that {\em weak $D-$differentiabi-lity  and  uniform $D-$differentiability are non-equivalent properties.  }

We let $\bt$ denote the unit circle with normalized Lebesgue measure 
$(1/(2\pi))m,$ $H$ the Hilbert space $L^2(\bt, m/(2\pi))$ and  $D = \frac{1}{i} \frac{d}{d\theta}$ 
the differentiation with respect to arclength times $-i.$  To be more precise we let the symbol $ D:= \frac{1}{i}\frac{d}{d\theta}$  denote the self-adjoint operator on $L^2(\bt, m/(2\pi))$  with pure point spectrum $\bz$ such that the functions $\f_n(z) = z^n, $ $n$ in $\bz$ form  an orthonormal basis of eigenvectors for $D$ with $D\f_n = n\f_n.$   The unitaries $u_t:= e^{itD} $ act on a function $f(z)$ in $L^2(\bt)$ such that $(u_tf)(z)) = f(e^{it}z),$ and we will let $f_t(z)$ be defined by $f_t(z) : = u_t f(z) = f(e^{it}z).$     

We will look at  multiplication operators $M_f$ where $f(z)$
is an essentially bounded measurable function on $\bt$ and it
is well known that $\|M_f\| = \|f\|_{\infty}.$ Then for  the automorphisms $\a_t $ of $B(H)$ which are implemented by $u_t$ we get for an essentially bounded measurable function $f(z)$ and a square integrable  function $g(z)$ that  $$\a_t(M_f)g = u_t (fg_{-t}) = f_tg = M_{f_t}g ,$$ so $\a_t(M_f) = M_{f_t}.$
We define the function $\theta(z) $ on $\bt$ 
by $\theta(z) =  \mathrm{Arg}(z),$ which is the   argument of the non-zero complex number $z$ in the interval $]-\pi, \pi].$ The operator $ a$ which will be shown to be weakly, but not uniformly $D-$differentiable, is the multiplication operator   induced by the function $|\theta|(z),$ which for $z$ in $\bt$ is given by $|\theta|(z) := |\theta(z)| = |\mathrm{Arg}(z)|.$ 
 The function $|\theta|(z)$ is continuous  on $\bt$ and  differentiable with respect to arclength except at the points $\{-1, 1\}.$ The function sign$(\theta)$ which we define below is equal to the derivative of $|\theta|$ except at the points $\{-1, 1\}.$
 \begin{displaymath} \text{sign}(\theta)(z) := \begin{cases} -1 \text{ for } - \pi < \mathrm{Arg}(z) < 0 \\ \,\, \,\,  1 \text{ for } \, \, \, \, \, \, \, 0 < \mathrm{Arg}(z) < \pi \\
\, \,\, \, \,  0  \text{ for } \,\,\,\, \, \, \,  z \in \{-1, 1 \} \end{cases}.\end{displaymath}

This lack of continuity for sign$(\theta)$ shows that there is no limit in the uniform norm for the set of  quotients $(|\theta|_t - |\theta|)/t$ for $t \to 0.$ Since the mapping $f \to M_f$ is an isometry and $$\big(\a_t(M_{|\theta|}) - M_{|\theta|}\big)/t =M_{\big((|\theta|_t -|\theta|)/t\big)},$$ 
we find that there is no limit in the operator norm for the quotients $\big(\a_t(M_{|\theta|}) - M_{|\theta|}\big)/t$ for $t \to 0.$  Hence $M_{|\theta|} $ is not uniformly $D-$differenti-able. 
 
 In order to show that $|\theta| $ is weakly $D-$differentiable we first remark, that the quotients $(|\theta|_t - |\theta|)/t $ defined for $0 < |t| < \pi$ are all bounded continuous functions of  norm $1,$ and for $0 < t < \eps < \frac{\pi}{2} $  we have $$ \forall z \in  \{ e^{i\theta}\, \big| \,  \theta \in [-\pi + \eps, - \eps] \cup   
 [\eps, \pi - \eps]\}: \, \, \,  (|\theta|_t - |\theta|)(z)/t  = \mathrm{sign}(\theta)(z).$$    Hence for any pair $ f(z), g(z)$ of square integrable functions on $\bt$  the product $ f(z) \overline{g(z)} \big((|\theta|_t - |\theta|)/t\big)(z)$ is an integrable function on  $\bt,$ which is numerically bounded by the positive integrable function $|f(z)||g(z)|.$ Hence by Lebesgue's theorem  on dominated convergence there exists a limit for $t \to 0,$  such that 
 \begin{align*}
&\underset{t \to 0} {\lim} \frac{1}{t}\la \big(M_{|\theta|_t} - M_{|\theta|}\big)f, g \ra  \\= & \underset{t \to 0} {\lim} \frac{1}{t}\frac{1}{2\pi} \int_{\bt} f(z) \overline{g(z)} \big(|\theta|(e^{it}z) - |\theta|(z)\big) \, dm(z) \\  = & \frac{1}{2\pi}\int_{\bt} f(z) \overline{g(z)} \mathrm{sign}(\theta)(z)\,dm(z)  =\la M_{\mathrm{sign}(\theta)}f, g \ra \end{align*} 
 
 Hence $M_{|\theta|}$  is weakly differentiable with $\d_w^D(M_{|\theta|}) = M_{\mathrm{sign}(\theta)}.$

\subsection{$D-$Lipschitz continuity}
\begin{defn}
A bounded operator $a$ on $H$ is said to be $D-$Lipschitz continuous  
if $$ \underset{t \to 0}{\lim \inf}\|\frac{\a_t(a) -a}{t}\| < \infty.$$
\end{defn} 

After we have presented the main theorem of this section, it becomes clear that a bounded  operator $a$ is $D-$Lipschitz continuous if and only if there exists a positive constant $c$ such that $$\forall t \in \br : \quad \|\a_t(a) - a\| \leq c |t|.$$

This means that in the setting where we look at multiplication operators on $L^2(\bt)$ and the self-adjoint operator $D = \frac{1}{i}\frac{d}{d\theta},$ a multiplication operator $M_f$ is $D-$Lipschitz continuous if and only if   $M_f$ may be given by a continuous function $f,$  and there exists a positive constant $c$ such that $$ \forall z \in \bt\, \forall t \in  \br : \quad |f(ze^{it}) - f(z)| \leq c|t|.$$

\subsection{Bounded $D-$form} \label{BdForm} 

Let $a$ be a bounded operator on $H$ and $\xi, \eta$  vectors in the domain of definition for $D,$ then the strong continuity of the one parameter unitary group $u_t = e^{itD}$ and  the manipulations below show that the limit $\underset{t \to 0}{\lim} \la(\a_t(a) - a) \xi, \eta \ra/t$ exists. Below and in the rest of this article the symbol $I$ denotes the identity operator on $H.$ 
\begin{align*}\frac{\la (\a_t(a) - a)\xi, \eta \ra}{t} = & \frac{\la a\xi, (u_{-t} - I)\eta \ra}{t} +\frac{\la a( u_{-t} - I)\xi, u_{-t}\eta \ra}{t} \\ \text{so }\underset{t \to 0}{\lim} \la(\a_t(a) - a) \xi, \eta \ra/t  =  & i \la a \xi, D \eta \ra - i \la aD\xi, \eta \ra.\end{align*}  Based on this we may make the following definitions.
 
\begin{defn} Let $a$ be in $B(H)$ the  sesquilinear form  $S(i[D,a])$  on $\mathrm{dom}(D)$ is  given as 
\begin{align*}\forall \xi, \eta \in \mathrm{dom}(D)\quad S(i[D,a])(\xi, \eta) := &i \la a \xi, D \eta \ra - i \la aD\xi, \eta \ra \\ = &\underset{t \to 0}{\lim} \la(\a_t(a) - a) \xi, \eta \ra/t. \end{align*}
\end{defn} 

\begin{defn} \label{bdform}
A bounded operator $a$ on $H$ is said to have a bounded $D-$form if  the sesquilinear form $S(i[D,a])$ on $\mathrm{dom}(D)$ is bounded. 
\end{defn}

\subsection{Bounded $D-$matrix commutator} 
 Given a bounded or unbounded self-adjoint operator $D$ on a  Hilbert space $H,$ then we will define a sequence of pairwise orthogonal projections $(e_n)_{n \in \bz}$ with strong operator sum $I$ by letting $e_n$ be the spectral projection for $D$ corresponding to the interval $]n-1, n].$ We will let $\cam $ denote the linear space consisting of all infinite matrices $ y = (y_{rc})_{r,c \in \bz}$ where $ y_{rc}$ is an operator in
 $e_rB(H)e_c.$ In this way it is possible to define a linear mapping
 $m: B(H) \to \cam $ by $m(a)_{rc} := e_r a e_c.$ We will also equip
 $\cam$ with the matrix product, whenever it gives a
 bounded operator at all entries, but we do not demand that the product must be the matrix of a bounded operator.    Also $D$ has an interpretation as an
 element $m(D)$ in $\cam.$ In order to give this definition we define
 for each $n$ in $\bz$ a bounded self-adjoint operator $d_n$ in $e_nB(H)e_n$ by $d_n := De_n,$ and then  we can define the canonical representative $m(D) $
 for $D$ in $\cam $ by
 \begin{displaymath} m(D)_{rc} = \begin{cases} 0 \text { if } r \neq c 
\\ d_c \text{ if } r = c\end{cases} . \end{displaymath} 
 It is quite obvious that for any matrix $y = (y_{rc})$ in $\cam,$ 
the commutator $[m(D), y] $ has a meaning, and it is the matrix given by 
\begin{displaymath} [m(D), y ]_{rc} := d_ry_{rc}  - y_{rc}d_c. \end{displaymath}  
Hence for any operator $a$ in $B(H)$ we can define the $D-$matrix commutator $m([D,a])$
as the infinite matrix in $\cam$  given by
$$   m([D, a])_{rc} \,   =  \,d_rae_c - e_r a d_c.$$

It should be noticed, that here we are discussing the commutator $m([D,a]),$ while we will consider 
$im([D,a]),$ when we will apply this matrix in the study of weak $D-$differentiability.

The main reason why we introduce these infinite matrices is, that the expression $m([D,a]),$  makes sense for any bounded operator $a,$ but also the higher commutators like $m([D,[D, \dots ,[D,a]\dots ]])$ all have an obvious interpretation as elements in $\cam.$  
An element $y =(y_{rc})$ in $\cam$ is said to be bounded, if there exists a bounded operator $b$ on $H$ such that for each pair $rc$ we have $y_{rc} = e_rbe_c.$
The space $\cam$ will be equipped with a *-operation which is an
extension of the classical one defined on bounded matrices, so we
define for an element $ y = (y_{rc}) $ of $\cam $ the element $y^* $ 
as the matrix $(y^*)_{rc} = (y_{cr})^*. $ Then it is clear that $y^*$
is  bounded  if and only if $y$ is bounded. We can now define the concept we call bounded $D-$matrix commutator.

\begin{defn} 
Let $a$ be a bounded operator on $H,$ then we say that $a$  has a bounded $D-$matrix commutator if $m([D,a])$ is bounded.
 \end{defn}
We end this subsection by an example, which is based on the example given in Subsection \ref{WneqU}. We will use the same notation without no further comments. The operator $D$ has the set $\{\f_n : n \in \bz\}$ of eigenvectors,  such that $D\f_n = n\f_n.$ Then the spectral projection $e_n$ is one dimensional, and  given as $e_n\xi = \la\xi, \f_n\ra \f_n.$  
\begin{exa} \label{NotMatrixBd}
There exists  a bounded function $f(z)$ on $\bt$   which is differentiable with respect to arclength everywhere except at 1 point, but the corresponding multiplication operator does not have a bounded $D-$matrix commutator.
\end{exa}

Recall that $\theta(z) := \mathrm{Arg}(z), $  then the  Fourier series for $\theta(z)$ equals $\sum_{n \neq 0} i((-1)^{n} /n)\f_n.$ The corresponding multiplication operator $M_\theta$ is bounded and we can compute the corresponding matrix from the equation 
\begin{displaymath}
\langle M_{\theta} \f_c, \f_r\rangle \, = \, \frac{1}{2\pi}\int_{\bt}\theta(z)z^{c-r}dm(z)\, = \, \begin{cases} 0 \qquad  \, \, \,  \text{ if } r=c \\
\frac{i(-1)^{r-c}}{r-c} \text{ if } r \neq c \end{cases},\end{displaymath}
and the elements in the matrix $m(M_{\theta})$ are given by  \begin{displaymath} m(M_\theta)_{rc} = \begin{cases} 0 \qquad \qquad \qquad \qquad  \text{ if } r =c \\ i\frac{(-1)^{r-c}}{r-c}e_rM_{(z^{(r-c)})}e_c \text{ if } r \neq c \end{cases}. \end{displaymath}  
The matrix  $im([D,a]),$ is given by 
 \begin{displaymath} i\,m([\frac{1}{i}\frac{d}{d\theta}, M_\theta])_{rc} = \begin{cases} 0 \qquad \qquad \qquad \qquad \quad \, \,  \text{ if } r =c \\ (-1)^{(r-c+1)} e_rM_{(z^{(r-c)})}e_c\text{ if } r \neq c \end{cases}. \end{displaymath}  
This is {\em not the matrix of a bounded operator  } $b$ on $H.$ Suppose it were then the elements   $(b_{r0})_{r \in \bz}$ in the column number $0$ of $b$'s matrix  would satisfy 
$$\forall N \in \bn: \, \sum_{k=-N}^{N} b_{k0}^*b_{k0} \leq \|b\|^2 e_0, \text{ but }  \sum_{k=-N}^{N} b_{k0}^*b_{k0} =  2N e_0.$$

  The main reason why the operator  $M_{\theta}$ is not weakly $\frac{1}{i}\frac{d}{d\theta}-$differenti-able is that the function $\theta $ does not belong to the domain of definition for the self-adjoint operator $\frac{1}{i}\frac{d}{d\theta}.$

\subsection{Bounded $D-$commutator} 
The  last property, we will study,  is well known and  fulfilled for uniformly $D-$differentiable operators. Before we present the definition below, we  recall the notion of a core for a closed unbounded   operator $T.$ A linear subspace $\ce$ of $H$ is said to be a core for $T$ if $\ce$ is contained in the domain of definition for $T$ and the closure of the restriction of $T$ to $\ce$ equals $T.$ 

\begin{defn} \label{BdCo} 
Let $a$ be in $B(H),$ then we say that $a$ has a bounded $D-$commutator if the operator $iDa -iaD$ is defined and bounded on a core for $D.$
\end{defn}
Below we extend  the example \ref{NotMatrixBd}, and we may  conclude that the {\em core } property in the Definition \ref{BdCo} is essential. 

\begin{exa} The operator $M_{\theta} $ has the property that the commutator  $i[\frac{1}{i}\frac{d}{d\theta},M_{\theta}]$ is a bounded operator on a dense subspace of $\mathrm{dom}(D),$ but the sesquilinear form $S(i[\frac{1}{i}\frac{d}{d \theta},M_{\theta}])$ is unbounded on $\mathrm{dom}(D).$  \end{exa}

Let $\ce$ denote the linear span of the eigenfunctions $\{z^n\, | \, n \in \bz \}$ of $\frac{1}{i} \frac{d}{d\theta}.$ The element in $\cam$ given as $m(i[\frac{1}{i} \frac{d}{d\theta},M_{\theta} ]) $ is unbounded, so it follows that the sesquilinear form $S(i[\frac{1}{i}\frac{d}{d\theta},M_{\theta}])$ is unbounded on $\ce$ and hence on dom$(\frac{1}{i}\frac{d}{d\theta}).$  Let $\cf:= \{ g\in \ce \, \big|\,  g(-1) =0\},$ then
$\cf$ is dense in $L^2(\bt)$ and for $g$ in $\cf$  we have that $M_{\theta}g$ is in dom$(D)$ and $$\frac{d}{d\theta}(M_{\theta}g) -M_{\theta}\frac{d}{d\theta}g = g, $$
so the commutator equals the identity operator on the subspace $\cf.$

We will now show that all the properties discussed above are equivalent.

\begin{thm} \label{WD}
Let $a$ be a bounded operator on $H.$ The following properties are equivalent:
\begin{itemize}
\item[(i)] $a$ is strongly $D-$differentiable.
\item[(ii)] $a$ is weakly $D-$differentiable.
\item[(iii)] $a$ is $D-$Lipschitz continuous.
\item[(iv)] The sesquilinear form $S(i[D,a])$ on the domain of $D$ is bounded.
\item[(v)]  The infinite matrix $m(i[D, a]) $ represents a bounded operator.
\item[(vi)] The operator $Da - aD$ is bounded and its domain of definition is a core for $D.$
\item[(vii)] The operator $Da - aD$ is bounded and its domain of definition is  dom$(D).$ 
\end{itemize}
If $a$ is weakly $D-$differentiable then  
\begin{align*} \forall \xi, \eta \in H \quad  \underset{t \to 0}{\lim} \frac{\la (\a_t(a) - a)\xi, \eta\ra } {t} = & \la \d_w^D(a) \xi, \eta\ra  \\ a\mathrm{dom}D \subseteq \mathrm{dom}D  \text{ and }  \d_w^D(a)\big| \mathrm{dom}(D) = & i(Da - aD) \\
\forall t \in \br: \quad \|\a_t(a) - a\| \leq & \|\d_w^D(a)\| |t|.
\end{align*}
\end{thm}

\begin{proof}
The proof is organized such that we prove (i) $\Rightarrow $ (ii)  $\Rightarrow $ (iii) $\Rightarrow $ (iv) $\Rightarrow $ (v)
$\Rightarrow $ (vi) $\Rightarrow $ (vii)
  $\Rightarrow $ (i).  

Proof of (i) $\Rightarrow$ (ii):

This is a consequence of the Cauchy-Schwarz  inequality. 
 
Proof of (ii) $\Rightarrow $ (iii):

This is a consequence of the uniform boundedness principle. We will apply this principle to the set $F := \{(\a_t(a) - a)/t\, \big|\, t \neq 0 \}.$  Let $\xi, \eta$ be vectors in $H$ then since the function $ g(t):= \la \a_t(a) \xi, \eta \ra$ is differentiable at $t =0$ there exists a positive $\d$ such that $$ |t| < \d \Rightarrow |\la\big((\a_t(a) - a)/t\big)\xi, \eta\ra - g^{\prime} (0) | < 1.$$ 
Hence for $0 < |t| < \d$  we have   $|\la\big((\a_t(a) - a)/t\big)\xi, \eta\ra| < 1 +  |g^{\prime} (0)|.$ 

\noindent
For $|t| \geq \d$ we have  $|\la\big((\a_t(a) - a)/t\big)\xi, \eta\ra| < (2/\d)\|a\|\|\xi\|\|\eta\|$  so 

\noindent
$\underset{t \neq 0}{\sup}   |\la\big((\a_t(a) - a)/t\big)\xi, \eta\ra| < \infty,$ then by the uniform boundedness principle  $\underset{t \neq 0}{\sup}   \|(\a_t(a) - a)/t\| < \infty,$ and $a$ is Lipschitz continuous. 

Proof of (iii) $\Rightarrow $ (iv):

If $a$ is Lipschitz continuous then there exists a positive real $k$ and a sequence $(t_n)$ of non-zero reals such that $t_n \to 0$ and $\|(\a_{t_n}(a) -a)/t_n\|\leq k.$ Then the arguments at the top of Subsection \ref{BdForm} imply that 
 \begin{align*} \forall \xi, \eta \in \text{dom}(D): \quad |S(i[D,a])(\xi, \eta) | = & \underset{t \to 0}{\lim}|\la(\a_t(a)-a)\xi, \eta\ra/t| \\
 = & \underset{n \to \infty}{\lim}|\la(\a_{t_n}(a)-a)\xi, \eta\ra/t_n| \\ & \leq k \|\xi\|\|\eta\| \text{ so } \|S(i[D,a])\| \leq k. \end{align*}

Proof of (iv) $\Rightarrow $ (v):

Given (iv) then there exists a bounded operator $b$ on $H$ such that for $c,r \in \bz,$ $\xi \in e_cH$ and $\eta \in e_rH$ we have 
\begin{align*} \la b \xi , \eta \ra = & S(i[D,a])(\xi, \eta ) \\ 
= & i\la a\xi, D\eta \ra - i\la aD\xi, \eta\ra \\ =&  
\la i( d_re_rae_c - e_rae_cd_c ) \xi, \eta\ra. \end{align*}

Hence the matrix $m(i[D,a]) $ is the matrix of the bounded operator $b.$ 

Proof of (v) $\Rightarrow $ (vi):

As in the concrete example above we define a core $\ce$ for $D$ by $$\ce:= \mathrm{span}\big( \cup_{n \in \bz} e_nH\big).$$
Suppose there is a bounded operator $b$ such that for any pair $r, c$ in $\bz$ we have $ e_rbe_c = i(d_re_rae_c - e_rae_cd_c),$ then for $\xi$ in $e_cH$ and $ \eta $ in $e_rH$ we get $$\la b \xi, \eta \ra = i\la  (d_re_rae_c - e_rae_cd_c)\xi, \eta \ra = i\la a\xi, D\eta \ra -i \la aD\xi, \eta \ra.$$ By linearity and rearrangement  we get 
$$ \forall \xi, \eta \in \ce : \quad |\la a \xi, D \eta \ra| \leq (\|b\|\|\xi\|+ \|aD\xi\|) \|\eta\| .$$ For a fixed $\xi $ in $\ce$ this implies that $a\xi $ is in the domain of definition for $(D\big|\ce)^*$ the adjoint of the  restriction of $D$  to $\ce$. Now $\ce$ is a core for $D$ so  $(D\big|\ce)^* = D$  and we get that $a\ce \subseteq $ dom$(D),$ which then shows that $i(Da-aD)$ is defined on the core $\ce $ and equals the bounded operator $b$ on that space.   

Proof of (vi) $\Rightarrow $ (vii):

Suppose that $Da-aD$ is defined on a core $\cf$ for $D$ and that there exists a bounded operator $b$ such that $i(Da-aD) = b\big|\cf.$ Then since $\cf$ is a core for $D$ we can for any vector $\xi$ in dom$(D)$ find a sequence of vectors $(\phi_n ) $ in $\cf$ such that $$ \phi_n \to  \xi \text{ and } D\phi_n \to D\xi \text{ for } n \to \infty.$$
Since   $b\phi_n$ converges towards $b\xi,$ $a\phi_n$ converges towards $a\xi,$ $aD\phi_n$ converges towards $aD\xi,$ and $a\phi_n$ is in dom$(D)$  we get by linearity 
$$ a\phi_n \to a\xi \text{ and } Da\phi_n \to - i b \xi + aD\xi.$$

Hence $a\xi $ is in dom$(D)$ and $b \xi = i(Da-aD)\xi.$

Proof of (vii) $\Rightarrow $ (i): 
Let us first recall that the unitary representation $u_t$ of $\br$ is strongly continuous.
Let   $a$ be a bounded operator which satisfies  condition (vii), let $b$ denote the bounded operator such that the restriction $b\big|$dom$(D)$ equals $i(Da-aD)$  and let $\xi_0$ denote a vector in dom$D,$  then we will show that the function $\a_t(a)\xi_0$ is differentiable at $t=0$ with derivative $b\xi_0.$ Since $\xi_0 $ is in dom$(D)$ so is $a\xi_0$ and we have  
$$\underset{t \to 0 }{\lim}(1/t)(u_{-t} - I)\xi_0 = -iD\xi_0 \text{ and } \underset{t \to 0 }{\lim}(1/t)(u_{t} - I)a\xi_0 = iDa\xi_0.$$ We have $b\xi_0 = i(Da-aD)\xi_0, $ so we may estimate  
\begin{align*}
\forall t \neq 0: \,  \|(1/t) ( \a_t(a) - a )\xi_0 - b\xi_0\| & \leq \|u_ta\big((1/t)(u_{-t} -I)\xi_0 + iD\xi_0 \big) \| \\ &+ \|(1/t)(u_t-I)a\xi_0 -iDa\xi_0\| \\ &+ \|i(I-u_t)aD\xi_0\| \end{align*} 

By the equalities above and the strong continuity of $u_t,$ the inequality show that $\underset{t\to 0}{\lim}\|(1/t) ( \a_t(a) - a )\xi_0 - b\xi_0\| = 0,$ and we have established the strong differentiability for a vector $\xi_0 $ in dom$(D).$ 

Let now $s$ be any real then since $u_s = e^{isD}, $ the unitary $u_s$ commutes with $D$ and  then $u_s\mathrm{dom}(D) = \mathrm{dom}(D).$ Hence $\a_s(a)\mathrm{dom}(D) \subseteq \mathrm{dom}(D), $ and the domain of definition is  dom$(D)$ in the equations below
  $$i(D\a_s(a) - \a_s(a)D) =u_s(i(Da - aD))u_{-s}= \a_s(b)\big|\mathrm{dom}(D).$$ We find that $\a_s(a)$ also satisfies condition (vii) and that the closure of the commutator $i[D,\a_s(a)]$ is $\a_s(b). $ We will then recall the beginning of the proof of (vii) implies (i) and note that for $\xi_0$ in dom$(D)$ the function $\a_s(a)\xi_0$ is differentiable everywhere with derivative $\a_s(b)\xi_0.$ 

We will now show that we can replace $\xi_0$ by any vector in $H$ and then establish the strong $D-$differentiability of $a.$ We will do so by constructing $\a_s(a)\xi$ as an integral, but in order to be able to integrate we need some continuity properties. 
 Since the function $t \to u_t = e^{itD}$ is strongly continuous and $u_t$ is unitary, we can for a vector $\xi$ in $H$ and real numbers $s, t$ estimate \begin{align*} \|\a_{t+s}(b)\xi  - \a_{s}(b)\xi\| &\leq \|\a_{t+s}(b)(I - u_t)\xi)\| + \| (u_t -I)\a_s(b) \xi\|\\  &\leq \|b\|\|(u_t -I)\xi\| + \| (u_t -I)\a_s(b) \xi\|. \end{align*}
Hence the bounded function $s \to \a_s(b)\xi $  from $\br $ to $H$  is continuous and then Riemann integrable on any bounded interval, so we can define an operator valued  function $c(t) $  by 
$$c(t)\xi := a\xi + \int_0^t \a_s(b)\xi ds.$$ 

 By construction $c(t)$ is a bounded operator, the function $t \to  c(t)\xi $ from $\br$ to $H$ is norm differentiable and satisfies $$c(0)\xi = a\xi \text{ and   } \frac{d}{ds}(c(s)\xi) = \a_s(b)\xi.$$
For a vector $\xi_0$ in dom$(D)$ we know that the function $\a_s(a)\xi_0$ is norm differentiable and satisfies  
$$\a_0(a)\xi_0 = a\xi_0  \text{ and   } \frac{d}{ds}(\a_s(a)\xi_0 )= \a_s(b)\xi_0.$$
Hence we conclude that for $\xi_0$ in dom$(D)$ and $s$ in $\br$ we have $c(s)\xi_0 = \a_s(a)\xi_0.$ Since both of the operators $c(s)$ and $\a_s(a)$ are bounded we get that $c(s) = \a_s(a),$ and then
$$ \forall \xi \in H: \underset{t \to 0}{\lim}(1/t)(\a_t(a) -a)\xi  = b\xi,$$
so $a $ is strongly $D-$differentiable.  

Suppose that $a$ is a weakly $D-$differentiable operator, then for any pair of vectors $\xi, \eta$ we have 
$\la \d_w^D(a)\xi,\eta \ra = \underset{t \to 0}{ \lim}\la (\a_t(a) -a)\xi, \eta \ra/t .$   The space dom$(D)$ is invariant under $a$  and $\d_w^D(a)\big|$dom$(D) = i(Da-aD).$ The proof of (vii) $\Rightarrow$ (i) shows that $$|\la( \a_t(a)-a) \xi,\eta\ra| = |\int_0^t\la \a_s(\d_w^D(a))\xi, \eta \ra ds| \leq \|\d_w^D(a)\|\|\xi\|\|\eta\||t|,$$ so $\|\a_t(a) - a\| \leq \|\d_w^D(a)\||t|$ and the theorem follows.  
\end{proof}

In analogy with the properties for the uniform $D-$derivation  we have the following theorem. 

\begin{thm} \label{WAlg}
The domain of definition $\mathrm{dom}(\d_w^D)$ is a strongly dense *-subalgebra of $B(H)$ and $\d_w^D$ is a *-derivation into $B(H)$  which extends $\d_u^D.$  
The graph of $\d_w^D$ is  weakly  closed and $\mathrm{dom}(\d_w^D)$   is a Banach *-algebra under the norm 
$\|a\|_D := \|a\| + \|\d_w^D(a)\|.$ 
\end{thm}

\begin{proof}
Suppose that $a, b $ are both weakly $D-$differentiable operators then by Theorem \ref{WD} \begin{align*}\|\a_t(ab) - ab\| \leq & \|\a_t(a)\|\|\a_t(b) -b\| + \|\a_t(a) -a\|\|b\|\\ \leq & \big(\|a\|\|\d_w^D(b)\| +\|\d_w^D(a)\|\|b\|\big)|t|,\end{align*}
so $ab$ is $D-$Lipschitz continuous and hence  the domain of $\d_w^D$ is an algebra, and clearly a *-algebra too.  Since any weakly $D-$differentiable operator leaves dom$(D)$ invariant we have for any $\xi $ in dom$(D)$ that 
\begin{align*}
\d_w^D(ab)\xi = & i(Dab -abD)\xi \\=& i(Da-aD)b\xi + a(i(Db-bD))\xi \\ =& (\d_w^D(a)b + a\d_w^D(b))\xi,\end{align*} and then $\d_w^D $ is a derivation. Since uniformly $D-$differentiable operators are weakly differentiable we get that $\d_w^D$ extends $\d_u^D,$ so its domain of definition is dense in the strong operator topology on $B(H)$ too. 

We will now show that the graph of $\d_w^D$ is  closed in the weak operator topology on $B(H)\oplus B(H).$ It is quite easy to prove, but we find the result surprising, since it is our general impression,  that differentiability does not play well together with the weak operator topology. We first remark that the graph of $\d_w^D$ is a linear space so by \cite{GKP} page 173, the strong operator closure of the graph equals the weak operator closure, and it is sufficient to prove that the graph is a strong operator closed subspace of $B(H) \oplus B(H).$  
Let $\big( (a_{\iota}, \d_w^D(a_{\iota}))\big)_{(\iota \in J)}$ be a strongly convergent net in the graph of $\d_w^D$ with limit point $(x,y).$ Then for any $\xi$ in dom$(D)$ the net  $\big(a_{\iota}\xi\big)_{(\iota \in J)} $ converges towards $x\xi,$ the net $\big(\d_w^D(a_{\iota})\xi\big)_{(\iota \in J)}$ converges towards $y\xi$ and the net  $\big(a_{\iota}D\xi\big)_{(\iota \in J)} $ converges towards $x D \xi.$ Since each $a_{\iota} $ is weakly $D-$differentiable we know that $a_{\iota} \xi $ is in dom$(D)$ and 
$\d_w^D(a_{\iota})\xi = i(Da_{\iota}-a_{\iota}D)\xi.$ By rearrangement we get $$Da_{\iota}\xi = -i\d_w^D(a_{\iota} )\xi + a_{\iota}D\xi \to -iy\xi
+ xD\xi \text{ through } \iota \in J.$$
Since $D$ is a closed operator we find that $$x\xi \in \mathrm{dom}(D) \text{ and } y\xi = i(Dx - xD)\xi.$$
Then by Theorem \ref{WD} we have that $x$ is weakly differentiable and $y = \d_w^D(x),$ so the graph of $\d_w^D$ is strongly closed and then also weak operator closed. 

For weakly $D-$differentiable operators $a,b$ we have $$ \|ab\|_D = \|ab\| +\|\d_w^D(a)b + a\d_w^D(b)\| \leq \|a\|_D \|b\|_D $$
 and the weak operator  closedness of the graph of $\d_w^D$  implies that  $\big(\mathrm{dom}(\d_w^D), \|.\|_D \big)$ is a Banach *-algebra. The theorem follows.  
\end{proof}

The following result is well known, but we present it here, because it shows how it is possible to extend the Banach algebra property, we just established,  to algebras which are obtained  as intersections of domains of unbounded derivations or domains of powers of such derivations. A situation like this is quite often included in the setup in noncommutative geometry.

Let $A$ be a norm closed subalgebra of $B(H)$ and  $\d_1, \dots, \d_k$ closed derivations  defined on subalgebras  of $A$ and with values in $B(H).$  Let $n_1, \dots, n_k$ be natural numbers and for $1  \leq j \leq k $ let  $$\ca_j:=  \mathrm{dom}(\d_j^{n_j})\, \text{ and }\, \forall a \in \ca_j:  \|a\|_{(j,n_j)} := \sum_{i=0}^{n_j}\frac{1}{i!}\|\d_j^i(a)\|.$$ Finally we define 
$$\ca := \cap_{j=1}^k \ca_j \text{ and } \forall a \in \ca: \, \||a|\|:=  \underset{1 \leq j \leq k}{\max}\|a\|_{(j,n_j)}.$$

\begin{pro}  
The normed space $(\ca, \||.|\|)$ is a Banach algebra. 
\end{pro} 
\begin{proof} Since the domain of definition for any $\d_j$ is an algebra we get by induction  that for any pair $a,b$ in dom$(\d_j^i)$ that the product $ab $ also belongs to that domain and 
$$\d_j^i(ab) = \sum_{k=0}^i \binom{i}{k}\d_j^k(a)\d_j^{i-k}(b).$$ 
That property then gives 
$$\|ab\|_{(j,n_j)} \leq \sum_{l=0}^{n_j} \sum_{k=0}^{n_j} \frac{1}{l!}\frac{1}{ k!}\|\d_j^l(a)\|\|\d_j^k(b)\| = \|a\|_{(j,n_j)}\|b\|_{(j,n_j)}.$$

It then follows that the norm $\||a|\| $ on $\ca$ is submultiplicative and the closedness assumptions on the derivations $\d_j$ will yield the completeness property of the normed algebra, so we have a Banach algebra.
\end{proof}

\section{uniform versus weak $D-$differentiability} 

We know that any bounded uniformly $D-$differentiable operator on $H$ is weakly $D-$differentiable, and the example in subsection  \ref{WneqU}
de-monstrates that there exists weakly $D-$differentiable operators, which are not uniformly $D-$differentiable. Above we showed that the graph of $\d_w^D$ is weak operator  closed, and this indicates in some imprecise way that there must be many more weakly $D-$differentiable operators than uniformly $D-$differentiable ones. Here we will present a condition which characterizes those weakly $D-$differentiable  operators that are uniformly $D-$differentiable.

\begin{thm} \label{CSD}
Let $a$ be a bounded operator on $H$ then $a$ is uniformly
$D-$differentiable if and only if it is weakly $D-$differentiable and
the function $t \to \alpha_t(\d_w^D(a))$ is norm continuous.
\end{thm}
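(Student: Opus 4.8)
The plan is to base both implications on a single integral identity already implicit in the proof of the Lipschitz characterization above. Recall from that proof that for $\xi,\eta\in\ce$ the scalar function $t\mapsto\la(\alpha_t(a)-a)\xi,\eta\ra$ is differentiable with derivative $i\la\alpha_t(\mathrm{w}D(a))\xi,\eta\ra$; here I use that the strong derivative $\delta(a)=\lim_{t\to0}(\alpha_t(a)-a)/t$ equals $i\,\mathrm{w}D(a)$, exactly as in that computation. In particular $s\mapsto\alpha_s(\delta(a))$ is norm continuous if and only if $s\mapsto\alpha_s(\mathrm{w}D(a))$ is, since the two orbits differ only by the scalar $i$.

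For the implication that weak $D$-differentiability together with norm continuity of $t\mapsto\alpha_t(\mathrm{w}D(a))$ gives strong $D$-differentiability, I would first integrate the identity above. Since the left-hand function vanishes at $t=0$, the fundamental theorem of calculus gives $\la(\alpha_t(a)-a)\xi,\eta\ra=i\int_0^t\la\alpha_s(\mathrm{w}D(a))\xi,\eta\ra\,ds$ for all $\xi,\eta\in\ce$. Norm continuity of $s\mapsto\alpha_s(\mathrm{w}D(a))$ makes $\int_0^t\alpha_s(\mathrm{w}D(a))\,ds$ a $B(H)$-valued Bochner integral, and since each functional $x\mapsto\la x\xi,\eta\ra$ passes through it, the identity says that the bounded operators $\alpha_t(a)-a$ and $i\int_0^t\alpha_s(\mathrm{w}D(a))\,ds$ agree on the dense set $\ce$ and hence are equal. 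Dividing by $t$ and using that the average of a norm-continuous integrand over $[0,t]$ tends to its value $\mathrm{w}D(a)$ at $s=0$, I obtain $(\alpha_t(a)-a)/t\to i\,\mathrm{w}D(a)$ in norm, so $a$ is strongly $D$-differentiable.

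For the converse, suppose $a$ is strongly $D$-differentiable. Then $\alpha_t(a)\to a$ in norm, and because each $\alpha_s$ is an isometric $*$-automorphism, $s\mapsto\alpha_s(a)$ is norm continuous and everywhere norm differentiable with derivative $\alpha_s(\delta(a))$. Weak $D$-differentiability is immediate: the norm convergence of $(\alpha_t(a)-a)/t$ forces $f_{\mu\nu}(t)=\la\alpha_t(a)\mu,\nu\ra$ to be differentiable at $t=0$ for every $\mu,\nu\in H$, so $a$ is weakly $D$-differentiable by the corollary. The remaining claim is that $s\mapsto\alpha_s(\mathrm{w}D(a))$ is norm continuous. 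Here I would note that for each fixed $h\ne0$ the map $s\mapsto(\alpha_{s+h}(a)-\alpha_s(a))/h$ is norm continuous, while $\|(\alpha_{s+h}(a)-\alpha_s(a))/h-\alpha_s(\delta(a))\|=\|(\alpha_h(a)-a)/h-\delta(a)\|$ is independent of $s$; thus these continuous maps converge to $s\mapsto\alpha_s(\delta(a))$ uniformly in $s$ as $h\to0$, and a uniform limit of continuous functions is continuous. Since the orbits of $\delta(a)$ and $\mathrm{w}D(a)$ differ only by the scalar $i$, this yields norm continuity of $s\mapsto\alpha_s(\mathrm{w}D(a))$.

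I expect the converse to be the delicate step: everywhere norm-differentiability of the orbit $s\mapsto\alpha_s(a)$ does not by itself force its derivative to be norm continuous, and the argument genuinely uses that $\alpha_s$ is isometric in order to upgrade the pointwise convergence of the difference quotients to convergence uniform in $s$. In the forward direction the only real content is the integral representation $\alpha_t(a)-a=i\int_0^t\alpha_s(\mathrm{w}D(a))\,ds$; granting it, the conclusion is a routine averaging limit.
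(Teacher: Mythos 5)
Your proof is correct, and it splits naturally against the paper's. The direction ``weakly $D$-differentiable plus norm-continuous orbit of $\mathrm{w}D(a)$ implies strongly $D$-differentiable'' is essentially the paper's own argument: the paper sets $f(t)=a+i\int_0^t\alpha_s(\mathrm{w}D(a))\,ds$ and identifies $f(t)$ with $\alpha_t(a)$ by comparing the scalar functions $\la f(t)\xi,\eta\ra$ and $\la\alpha_t(a)\xi,\eta\ra$ on $\mathrm{dom}(D)$ --- exactly your integral representation $\alpha_t(a)-a=i\int_0^t\alpha_s(\mathrm{w}D(a))\,ds$; your concluding averaging step and the paper's differentiation of $f$ are interchangeable. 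In the converse direction you take a genuinely different (and equally valid) route: the paper writes $\alpha_t(a)=a+it\,\mathrm{w}D(a)+it\,\eps(t)$ with $\eps(t)\to 0$, evaluates at $-t$, applies $\alpha_t$ and adds to get $\alpha_t(\mathrm{w}D(a))-\mathrm{w}D(a)=\eps(t)-\alpha_t(\eps(-t))$, which gives continuity at $0$ and must then be translated to each point $s$ by $\alpha_s$; you instead note that the difference quotients $s\mapsto\alpha_s\bigl((\alpha_h(a)-a)/h\bigr)$ are norm continuous (differences of translates of the norm-continuous orbit of $a$) and, since $\alpha_s$ is isometric, converge to $s\mapsto\alpha_s(\delta(a))$ uniformly in $s$, so the limit is continuous everywhere in one stroke. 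Both hinge on the same isometry; yours avoids the separate translation step, the paper's makes the $t=0$ mechanism more explicit. One cosmetic caveat: in the first direction you motivate the identity $\frac{d}{dt}\la\alpha_t(a)\xi,\eta\ra=i\la\alpha_t(\mathrm{w}D(a))\xi,\eta\ra$ by invoking the strong derivative $\delta(a)$, which is not yet known to exist at that stage; the identity is in fact supplied by the proof of the Lipschitz theorem from weak differentiability alone, so the argument stands once that reference is corrected.
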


\begin{proof} 
Let us first assume that $a$ is uniformly $D-$differentiable and we find as in the proof of item (ii) of  Theorem \ref{du} that there exists a function  $\eps(t)$ with values in $B(H)$
which satisfies $\eps(0) = 0,$ is continuous at $0$ and
$$\alpha_t(a) = a + t\d_u^D(a) + t\eps(t).$$ We may repeat some of  the arguments from that proof and we obtain from Equation \ref{bCont} that 
$$\alpha_t(\d_u^D(a)) - \d_u^D(a) = \eps(t) -
\alpha_t(\eps(-t)),$$ so the function $\alpha_t(\d_u^D(a))$ is norm continuous at $t = 0.$  As in the previous proof we find that this continuity at $t=0$ may be extended to any real $t.$

Now assume that $a$ is a weakly $D-$differentiable operator satisfying the condition
that $t \to \alpha_t(\d_w^D(a)) $ is norm continuous.
The proof of Theorem \ref{WD}, (vii) $\Rightarrow$ (i), can then be copied, and we define   a function $c(t) : \br \to B(H)$ by $$ c(t) \, := \, a +  \int_0^t
\alpha_s(\d_w^D(a)) ds.$$ Then $c(t) $ is norm differentiable
everywhere with $c^{\prime}(t) =  \alpha_t(\d_w^D(a)), $  and for  any
pair of vectors $\xi, \eta $  the scalar valued function
$g(t):= \la c(t) \xi , \eta \ra$ is then differentiable everywhere and
we have $g^{\prime}(t) =  \la \alpha_t(\d_w^D(a))\xi, \eta \ra.$ On
the other hand the weak differentiability of $a$ implies that the scalar valued 
function $h(t) \, := \, \la \alpha_t(a) \xi , \eta \ra $ is 
differentiable with derivative $h^{\prime}(t) =  \la
\alpha_t(\d_w^D(a)) \xi, \eta \ra.$ Hence $g^{\prime}(t) =
h^{\prime}(t)$ and since we also have $g(0) = h(0) = \la a
\xi, \eta \ra $ we have $g(t) = h(t)$ and then $c(t) = \alpha_t(a),$
so $a$ is uniformly $D-$differentiable.
\end{proof}

\begin{cor}
Let $n$ be a natural number and $a$  an  $(n+1)-$times weakly $D-$differentiable operator in $B(H).$ Then it is
$n-$times uniformly $D-$differentiable.
\end{cor}

\begin{proof}
The operator $(\d_w^D)^n(a)$ is weakly differentiable and then by Theorem \ref{WD}, $(\d_w^D)^n(a)$   is $D-$Lipschitz continuous and in particular the function $t \to \a_t((\d_w^D)^n(a)) $ is norm continuous. The operator $(\d_w^D)^{n-1}(a)$ is weakly $D-$differentiable such that $\a_t(\d_w^D(\d_w^D)^{n-1}(a)))$ is norm continuous, then $(\d_w^D)^{n-1}(a)$ is uniformly $D-$differentiable. Then we repeat the argument and we find  that all the uniform derivatives $(\d_u^D)^j(a)$ exist for $1 \leq j \leq n.$  
\end{proof} 

\begin{exa} \label{No} \end{exa}
The contents of Theorem \ref{CSD} and Example \ref{WneqU} show that the  function $t \to \a_t(M_{\mathrm{sign}(\theta)}) $ is not norm continuous, and hence $\a_t$ is not strongly continuous. We will like to make this observation explicit by pointing out that \begin{displaymath}
\|\a_t(M_{\mathrm{sign}(\theta)})- M_{\mathrm{sign}(\theta)}\|  = \| \mathrm{sign}(\theta)_t - \mathrm{sign}(\theta)\|_{\infty} = \begin{cases} 0 \text{ if } t = 2n\pi \\ 2 \text{ if } t \neq  2n\pi
\end{cases}
\end{displaymath}
As a  consequence of this and the proof of Theorem \ref{WD}, (vii) $\Rightarrow$ (i)  we get that the function $t \to \a_t(M_{\mathrm{sign}(\theta)})$ in $B(H) $ is not Riemann  integrable. If it were, then we would be able to show that  $$\a_t(M_{|\theta|})  = M_{|\theta|} +  \int_0^t\a_s(M_{\mathrm{sign}(\theta)})ds$$ which in turn shows that $M_{|\theta|} $ would be uniformly $D-$differentiable.

Another consequence is that for $D = \frac{1}{i}\frac{d}{d\theta} $ the domain  $\cd_u^D $ is not norm dense in $B(H).$ The following elementary application of the triangle inequality shows that no bounded operator $a $ such that $\|a - M_{\mathrm{sign}(\theta)}\| \leq \d < 1$ is uniformly $D-$differentiable. For such an operator   and a real $t$ with $0 < |t|< \pi$ we have   $$ \|a - \a_t(a)\| \geq 
\| \mathrm{sign}(\theta)_t -  \mathrm{sign}(\theta)\|_{\infty} - 2 \|M_{\mathrm{sign}(\theta)} -a\| \geq  2(1 - \d).$$

This raises the question if the algebra $\cd^D_w$ is norm dense in $B(H) ?$


\begin{thebibliography}{10}
\bibitem[1]{AC}
A.~Connes.
\newblock {\em Noncommutative geometry.}
\newblock Academic Press, 1994.



\bibitem[2]{K}
R.~V.~Kadison
\newblock {\em  Derivations of operator algebras.}  
\newblock   Ann. of Math. 83 (1966), 280-–293.


\bibitem[3]{KR}
R.~V.~Kadison and J.~Ringrose.
\newblock {\em Fundamentals of the theory of operator algebras.}
\newblock Academic Press, 1983.

\bibitem[4]{GKP}
G.~K.~Pedersen.
\newblock {\em Analysis now.}
\newblock Springer, 1989.



\bibitem[5]{PS} R.~T.~Powers and S.~Sakai.  
\newblock {\em Unbounded
  derivations in operator algebras.} 
\newblock  J. Funct. Anal., 19 (1975), 81--95.


\bibitem[6]{RS} 
M.~Reed and B.~Simon. 
\newblock {\em Methods of modern mathematical physics.} 
\newblock Academic Press, 1980.


\bibitem[7]{MR} M.~A.~Rieffel.  
\newblock {\em Gromov-Hausdorff
  Distance for Quantum Metric Spaces / Matrix Algebras Converge to the
  Sphere for Quantum Gromov-Hausdorff Distance }
  \newblock Mem. AMS., 168 (2004). 


\bibitem[8]{WR} 
W.~Rudin. 
\newblock {\em Functional Analysis.} 
\newblock McGraw-Hill, 1991.

\bibitem[9]{S}
S.~Sakai
\newblock  {\em Derivations of W*-algebras.}
\newblock  Ann. of Math., 83 (1966), 273–-279.


\bibitem[10]{BS}
B.~Simon.
\newblock {\em Hamiltonians defined as quadratic forms.}
\newblock  Comm. Math. Phys., 21 (1971), 192--210.



\end{thebibliography}
\end{document}